\newtheorem{theorem}{Theorem}[section]
\newtheorem{lemma}[theorem]{Lemma}
\newtheorem{corollary}[theorem]{Corollary}
\theoremstyle{definition}
\newtheorem{define}[theorem]{Definition}
\theoremstyle{remark}
\newtheorem{remark}[theorem]{Remark}
\numberwithin{equation}{section}
\begin{document}
\title[Gradient estimate]{Gradient estimates for some evolution equations on complete smooth metric measure spaces}

\author[N.T. Dung]{Nguyen Thac Dung}
\address[N.T. Dung]{Department of mathematics, College of Science\\ Vi\^{e}t Nam National University, Ha N\^{o}i, Vi\^{e}t Nam}
\email{\href{mailto: N.T. Dung <dungmath@gmail.com>}{dungmath@gmail.com}}

\author[K. T. Thuy Linh]{Kieu Thi Thuy Linh}
\address[K. T. Thuy Linh]{Department of mathematics, College of Science\\ Vi\^{e}t Nam National University, Ha N\^{o}i, Vi\^{e}t Nam}
\email{\href{mailto: Linh Kieu <thuylinhkt160991@gmail.com>}{thuylinhkt160991@gmail.com}}

\author[N. V. Thu]{Ninh Van Thu}
\address[N. V. Thu]{Department of mathematics, College of Science\\ Vi\^{e}t Nam National University, Ha N\^{o}i, Vi\^{e}t Nam}
\email{\href{mailto: N. V. Thu <thunv@vnu.edu.vn>}{thunv@vnu.edu.vn}}

\begin{abstract}
In this paper, we consider the following general evolution equation 
$$ u_t=\Delta_fu+au\log^\alpha u+bu $$
on smooth metric measure spaces $(M^n, g, e^{-f}dv)$. We give a local gradient estimate of Souplet-Zhang type for positive smooth solution of this equation provided that the Bakry-\'{E}mery curvature bounded from below. When $f$ is constant, we investigate the gereral evolution on compact Riemannian manifolds with no  
nconvex boundary satisfying an "\emph{interior rolling $R$-ball}" condition. We show a gradient estimate of Hamilton type on such manifolds. 
\end{abstract}

\subjclass[2010]{Primary 32M05; Secondary 32H02}

\keywords{Gradient estimates, Bakry-\'{E}mery curvature, Complete smooth metric measure space, Harnack-type inequalities, Liouville-type theorems.}
\maketitle


\section{Introduction}
Motivated by the understanding Hamilton's Ricci flow (\cite{Hamilton}), L. Ma introduced in \cite{Lima} the following nonlinear evolution equation
\begin{equation}\label{elima} 
\Delta u+au\log u+bu=0 
\end{equation}
on complete noncompact Riemannian manifolds $(M, g)$ with the Ricci curvature bounded from below. Here  $a<0, b$ are real constants. He obtained a gradient estimate for the positive solution of the evolution equation. Moreover, he pointed out that his result is almost optimal by considering Ricci solitons. To see the relationship between the equation \eqref{elima} and gradient Ricci solitons, let us recall some related notations. A Riemannian manifold $(M, g)$ is said to be a gradient Ricci soliton if there is a smooth function $f$ on $M$ and a constant $\lambda\in\mathbb{R}$ such that
$$ Ric+Hess f=\lambda g. $$
Here $Ric$ is the Ricci tensor of the manifold and $Hess$ is the Hessian with respect to the metric $g$. Gradient Ricci solitons play an important role in Hamilton's Ricci flow as it corresponds to the self-similar soliton and arises as limits of dilations of singularies in the Ricci flow. A Ricci soliton is said to be shrinking, steady, or expanding according to $\lambda$ is positive, zero and negative. Using the contracted Bianchi identity and letting $u=e^{f}$, Ma proved that the equation $Ric+Hess f=\lambda g$ can be deformed to 
$$ \Delta u+ 2\lambda u\log u=(A_0- n\lambda)u $$
where $A_0$ is a constant which can also be determined by choosing extremal point of $f$.

It is raised by Ma an interesting question that consider the local gradient estimate for positive solutions to following
evolution equation
\begin{equation}\label{elima1}
u_t = \Delta u+ au\log u +bu
\end{equation}
in $M$, where $a, b$ are real constants.

Due to the nature of the equations \eqref{elima}, \eqref{elima1} and its potential in studying gradient Ricci solitons, later, \eqref{elima} and \eqref{elima1} are investigated in several papers. For example, in \cite{Yang, Chenchen}, Yang and Chen et al. proved gradient estimates of Li-Yau type for the positive solution of \eqref{elima1} on Riemannian manifold $(M^n, g)$. Consequently, when the Ricci curvature is non-negative, Yang showed that the positive smooth solution of \eqref{1} must be bounded from below by $e^{-n/4}$ (rep. from upper by $e^{n/2}$) if $a<0$ (rep. $a>0$).  (See \cite{Chenchen} for similar results). In \cite{HuangHL}, Huang et al. also pointed out local gradient estimates of the Li-Yau type for positive solutions of \eqref{elima1} on Riemannian manifolds with Ricci curvature bounded from below. As applications, they derived several parabolic Harnack inequalities. Later, in \cite{Cao} Cao et al. considered gradient estimates of positive smooth solution of \eqref{elima1} and applied to study the Ricci flow and gradient Ricci solitons. They proved several differential Harnack inequalities and used them to derive bounds on gradient Ricci solitons. Recently, in \cite{Jiang}, Jiang introduced a local Hamilton type gradient estimate for \eqref{elima1} and obtained a Liouville type theorem for bounded smooth solution of \eqref{elima1}. We refer the reader to \cite{Cao, Chenchen, HuangHL, Jiang, Yang} for further references.   
  
It is worth to notice that gradient Ricci solitons are special cases of the so called smooth metric measure spaces. Recall that an $n-$dimensional smooth metric measure space $(M^n, g, e^{-f} dv)$ is a complete Riemannian manifold $(M^n, g)$ endowed with a weighted measure $e^{-f} dv$ for some $f \in C^{-\infty} (M)$, where $dv$ is the volume element of the metric $g$. On $(M^n,g, e^{-f} dv )$, we have the Bakry-\'{E}mery Ricci tensor 
$$ Ric_f:= Ric + Hess f$$
Many geometric and topological properties for manifolds with Ricci tensor bounded below can be possibly extended to smooth metric measure spaces with the Bakry-\'{E}mery Ricci tensor bounded below. On smooth metric measure spaces, the $f-$Laplacian $\Delta f$ is defined by
$$\Delta_f:= \Delta - \nabla f. \nabla, $$
which is self-adjoint with respect to the weighted measure. Observe that gradient Ricci solitons are smooth metric measure spaces in which $Ric_f=\lambda g$. Therefore, it is also natural to consider modified versions of \eqref{elima}, \eqref{elima1} on such these spaces. Indeed, in \cite{Wu2010}, Wu studied below nonlinear equation
\begin{equation}\label{equa}
u_t=\Delta_fu+au\log u+bu
 \end{equation}
on smooth metric measure spaces $(M^n, g, e^{-f}dv)$ with $m$-dimensional Bakry-\'{E}mery curvature bounded from below. He obtained a local gradient estimate for positive smooth solutions of \eqref{equa}. Then, Wu improved his results by assuming the Bakry-\'{E}mery curvature bounded from below (see \cite{Wu, Wup}). Later, in \cite{Huangma2010, Huangma2015, Dungkhanh}, Huang and Ma, Khanh and the first author studied gradient estimates of Hamilton type and Souplet-Zhang type for the equation \eqref{equa}. As an application, they derived some Liouville type theorems and Harnack inequalities for bounded smooth solution of \eqref{equa}. 

Recently, motivated by studying of \eqref{elima1}, in \cite{Zhuli}, Zhu and Li investigated the following nonlinear evolution equation on complete Riemannian manifolds (with or without boundary) 
$$ \left(\Delta-\frac{\partial }{\partial t}\right)u +au\log^{\alpha}u+bu=0, $$
where $a, \alpha$ are constants and $b$ is ${\mathcal C}^2$ function on $M\times(0, \infty)$. They proved a gradient estimate on complete compact Riemannian manifolds with convex boundary which can be considered a generalization of the famous Li-Yau's gradient estimates (see \cite{LY}). Moreover, they were able to derive gradient estimates of Li-Yau type for positive smooth solution of \eqref{equa} on complete non-compact Riemannian manifolds.
 
In this paper, motivated by studying of \eqref{elima1} and gradient estimates of \eqref{equa}, we consider on smooth metric measure spaces the following general evolution equation
\begin{equation}\label{1}
\Bigl(\Delta_f - \dfrac{\partial}{\partial t} \Bigr) u(x,t) +a u(x,t)(log (u(x,t)))^{\alpha} +bu(x,t)=0,
\end{equation}
where $a, b, \alpha$ are constants. We sometimes write $u(x,t)$ as $u$, also write $\dfrac{\partial u}{\partial t}$ as $u_t$. Our aim is to find gradient estimates of Souplet-Zhang type on complete Riemannian manifolds with or without boundary. We also look for applications of these estimates. Our first main theorem can be read as follows.
\begin{theorem}\label{theomain}
Let $(M, g, e^{-f}dv)$ be an $n$- dimensional complete smooth metric measure space. Suppose that for a fixed point $x_0 \in M$ and $ R \geq 2 $, the $Ric_f\geq-(n-1)K$ with some constant $K \geq 0$ on the $B(x_0, R)$.  Let $a, b$ are constants such that the evolution equation \eqref{1} has a smooth solution  $u$ satisfies $ 1< u \leq D$ with some constant $D>1$ in $Q_{R,T} \equiv B(x_0, R) \times [t_0-T, t_0]$, where $t_0 \in \mathbb{R}$ and $T>0$. For $(x,t) \in Q_{R/2, T}$ with $t \neq t_0 - T$ then
 \begin{enumerate}
\item [(i)]if $\alpha \geq 1$, there exists a constant $c(n)$ such that 
\begin{equation} \label{2}
\dfrac{ \left\vert \nabla u \right\vert}{u} \leq c(n) \left(1+\log \dfrac{D}{u} \right)  \left( \dfrac{1}{R}+ \sqrt{\dfrac{\mu}{R}} + \dfrac{1}{\sqrt{t-t_0+T}}+ \sqrt{K}+\sqrt{P_1} \right),
\end{equation}
where $ P_1=\max \{0, a\} [ \alpha ( \log D)^{\alpha-1} + ( \log D)^{\alpha} ]   + \max \{0, b\}.$
\item  [(ii)]if $\alpha < 1$, there exists a constant $c(n)$ such that 
\begin{equation}
\dfrac{ \left\vert \nabla u \right\vert}{u} \leq c(n)\left(1+\log \dfrac{D}{u}\right) \left( \dfrac{1}{R}+\sqrt{\dfrac{\mu}{R}} + \dfrac{1}{\sqrt{t-t_0+T}}+ \sqrt{K}+\sqrt{P_2}\right),
\end{equation} \label{3}
where $P_2=\max\{0, a \alpha\} \left\vert(\log u)^{\alpha-1}\right\vert_{\infty} + \max\{0, a\} \left\vert(\log u)^{\alpha}\right\vert_{\infty} + \max\{0, b\}$ and noting that $ \left\vert g \right\vert_{\infty}:=\sup_{Q_{R, T}}  \left\vert g \right\vert$.
\end{enumerate}
Here, $\mu: = \max_{\{x|d(x,x_0)=1\}} \Delta_f r(x)$, in that $r(x)$ is the distance function to $x$ from base point $x_0$.
\end{theorem}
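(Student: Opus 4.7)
The plan is to apply the Souplet-Zhang maximum-principle technique in the Bakry-\'{E}mery setting. Set $v := \log u$; since $1 < u \le D$, we have $0 < v \le \log D$, and \eqref{1} transforms into
\begin{equation*}
(\Delta_f - \partial_t) v \;=\; -|\nabla v|^2 - a v^\alpha - b.
\end{equation*}
Define $H := 1 + \log D - v$, so that $H \ge 1$ and $H = 1 + \log(D/u)$. The target auxiliary quantity is
\[F := \frac{|\nabla v|^2}{H^2},\]
since $|\nabla u|/u = |\nabla v|$, and the sought bounds \eqref{2}--\eqref{3} are exactly $\sqrt{F} \le c(n)\bigl(R^{-1} + \sqrt{\mu/R} + (t - t_0 + T)^{-1/2} + \sqrt{K} + \sqrt{P_i}\bigr)$ on $Q_{R/2,T}$. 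It therefore suffices to prove the corresponding $L^\infty$-bound on $F$.

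The next step is a weighted Bochner computation for $F$. Using
\[\Delta_f |\nabla v|^2 \;=\; 2|\nabla^2 v|^2 + 2\langle\nabla v, \nabla\Delta_f v\rangle + 2\,Ric_f(\nabla v,\nabla v),\]
the curvature lower bound $Ric_f \ge -(n-1)K$, the equation for $v$, and the identities $\nabla H = -\nabla v$, $\Delta_f H = -\Delta_f v$, a careful algebraic reduction (of the type carried out in Souplet-Zhang and its extensions) yields a pointwise inequality of the schematic form
\begin{equation*}
(\Delta_f - \partial_t) F \;\ge\; c_1 H F^2 + \frac{2}{H}\langle\nabla v, \nabla F\rangle - 2(n-1)K F - \Psi(v, F),
\end{equation*}
for a dimensional constant $c_1 > 0$, where $\Psi$ gathers the contributions from the non-linear source $a v^\alpha + b$. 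I would then apply a standard space-time cutoff $\psi(x,t) = \eta(r(x)/R)\,\chi(t)$, with $\eta$ a smooth non-increasing bump on $[0,1]$ and $\chi$ vanishing at $t = t_0 - T$, and invoke the maximum principle on $\psi F$ over $Q_{R,T}$. The derivative bounds on $\eta$ produce the $1/R$ factor; the $\Delta_f r$ contribution appearing via $\Delta_f(\eta(r/R))$ is controlled on $\{r \ge 1\}$ by the $f$-Laplacian comparison (using the $Ric_f$-bound, which also contributes the $\sqrt{K}$ term) and on $\{r \le 1\}$ by the hypothesis $\mu$, producing the $\sqrt{\mu/R}$ factor; the time-derivative of $\chi$ yields $(t - t_0 + T)^{-1/2}$.

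The main obstacle, and the point at which cases (i) and (ii) diverge, is estimating $\Psi(v, F)$. Differentiating the non-linearity $a v^\alpha + b$ feeds $\Psi$ with two dangerous terms: a cross-term of order $|a v^\alpha + b|\,F/H$ arising from the $\langle\nabla v, \nabla\Delta_f v\rangle$ piece of the Bochner formula combined with the equation for $v$, and a gradient-term of order $|a|\alpha v^{\alpha-1}\,F$ arising from $\langle\nabla v,\nabla(a v^\alpha)\rangle$. In case (i), $\alpha \ge 1$ makes both $v^{\alpha-1}$ and $v^\alpha$ non-decreasing on $(0,\log D]$, so each is dominated by its value at $v = \log D$; keeping only the unfavourable signs via $\max\{0,a\}$ and $\max\{0,b\}$ gives the factor $P_1$. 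In case (ii), $\alpha < 1$ breaks monotonicity, and one must instead bound the powers by their suprema $|(\log u)^{\alpha-1}|_\infty$ and $|(\log u)^\alpha|_\infty$ over $Q_{R,T}$, yielding $P_2$. At a maximum point $(x_1,t_1)$ of $\psi F$ one has $\nabla(\psi F) = 0$ and $(\Delta_f - \partial_t)(\psi F) \le 0$; since $H \ge 1$, the leading term $c_1 H F^2 \ge c_1 F^2$ dominates, and a sequence of Young-type inequalities absorbs every linear-in-$\sqrt{F}$ contribution into $F^2$, producing an upper bound on $(\psi F)(x_1,t_1)$ of the claimed form. Restricting to the subset $Q_{R/2,T}$ where $\psi \equiv \chi$ and taking square roots completes the proof.
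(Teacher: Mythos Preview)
Your approach is essentially identical to the paper's: writing $h=\log(u/D)$ so that $1-h$ is your $H$ and the paper's auxiliary function $\omega=|\nabla\log(1-h)|^2$ coincides with your $F$, the paper derives exactly the differential inequality you sketch (with $c_1=2$ and gradient coefficient $\tfrac{2h}{1-h}$ rather than $\tfrac{2}{H}$), then multiplies by a Souplet--Zhang cutoff $\psi$, applies the maximum principle, and handles $\Delta_f\psi$ via the Brighton $f$-Laplacian comparison $\Delta_f r\le \mu+(n-1)K(R-1)$, which is where both $\mu$ and the $\sqrt K$ term enter. Your identification of the source of $P_1$ versus $P_2$ (monotonicity of $v\mapsto v^{\alpha-1},v^\alpha$ on $(0,\log D]$ when $\alpha\ge1$, versus sup-norms when $\alpha<1$) matches the paper's Lemma~2.1 exactly.
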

As an application, we obtain the below Liouville property.
\begin{theorem}\label{liouville}
Suppose that $(M, g)$ is a complete noncompact smooth metric measure space with nonnegative Bakry-\'{E}mery  curvature. If $a<0$ and $\alpha>0$ then every positive smooth solution $u$ of the equation 
$$ \Delta_f u+au\log^\alpha u=0 $$
is constant provided that $u$ is bounded. Consequently, $u\equiv1$.
\end{theorem}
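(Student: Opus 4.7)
The plan is to regard the stationary solution $u$ as a time-independent solution of the parabolic equation \eqref{1} (with $u_t = 0$ and $b = 0$), and then to apply Theorem~\ref{theomain} on progressively larger parabolic cylinders. Setting $D := \sup_M u$, which is finite by the boundedness hypothesis, one works under $1 < u \leq D$, which is the natural setting since $\log^\alpha u$ must be real-valued.

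Next, I invoke Theorem~\ref{theomain} on $Q_{R,T} = B(x_0, R) \times [t_0 - T, t_0]$. The hypothesis $\Ric_f \geq 0$ allows us to take $K = 0$, while $a < 0$ and $b = 0$ give $\max\{0, a\} = 0 = \max\{0, b\}$, forcing $P_1 = P_2 = 0$. The gradient estimate then reduces to
\[
\frac{|\nabla u|}{u} \leq c(n)\Bigl(1 + \log \tfrac{D}{u}\Bigr)\Bigl(\frac{1}{R} + \sqrt{\tfrac{\mu}{R}} + \frac{1}{\sqrt{t - t_0 + T}}\Bigr).
\]
Fixing an arbitrary point $x \in M$ and the instant $t = t_0$, I let $R \to \infty$ and then $T \to \infty$; since $\mu$ depends only on $x_0$ and the factor $1 + \log(D/u(x))$ is bounded, the right-hand side tends to $0$. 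Hence $|\nabla u(x)| = 0$ for every $x \in M$, so $u$ is constant on $M$.

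Substituting the constant value $u \equiv c$ back into $\Delta_f u + a u \log^\alpha u = 0$ yields $a c \log^\alpha c = 0$; since $a \neq 0$ and $c > 0$, it follows that $\log c = 0$, giving $u \equiv 1$.

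The main technical obstacle is the strict inequality $u > 1$ presumed by Theorem~\ref{theomain}: if $u = 1$ at some point of $M$, one must either restrict to the open set $\{u > 1\}$ and apply the gradient estimate there (concluding via continuity that $u \equiv 1$), or verify directly that the proof of Theorem~\ref{theomain} extends to the non-strict regime $u \geq 1$, the key observation being that $1 + \log(D/u)$ remains bounded on $\{u \geq 1\}$ and so the relevant test quantities in the Souplet--Zhang argument stay well-behaved. Modulo this boundary concern, the Liouville conclusion is an immediate corollary of the gradient estimate.
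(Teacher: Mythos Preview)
Your proposal is correct and follows essentially the same route as the paper: regard the stationary solution as time-independent, apply Theorem~\ref{theomain} with $K=0$, $b=0$, and $P_1=P_2=0$ (since $a<0$, $\alpha>0$ force $\max\{0,a\}=\max\{0,a\alpha\}=0$), then send the spatial radius and the time parameter to infinity to obtain $|\nabla u|=0$ and hence $u\equiv 1$. Your extra care about the hypothesis $u>1$ is a valid technical point that the paper's own proof simply does not address.
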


The second result of this paper is a gradient estimate of Hamilton type for the following evolution equation 
$$ u_t=\Delta u+au\log u+bu $$
on compact Riemannian manifold with nonconvex boundary. Here the solution $u$ is considered to be satisfied the Neumann boundary condition $\dfrac{\partial u}{\partial \nu}=0$, where $\dfrac{\partial}{\partial \nu}$ is the outward pointing unit normal vector to $\partial M$. To state the result, we recall a definition. 
\begin{define}
Let $\partial M$ be the boundary of a compact Riemannian manifold $M$. Then $\partial M$ satisfies the interior rolling $R$-ball condition if for each point $p\in \partial M$ there is a geodesic ball $B_q(R/2)$, centered at some $q\in M$ with radius $R/2$, such that $\{p\}=B_q(R/2)\cap\partial M$ and $B_q(R/2)\subset M$.
\end{define}
Historically, the ``\emph{interior rolling $R$-ball}" condition is traced back to the paper \cite{Chen} by R. Chen. In that paper, Chen gave a lower bound of first Neumann eigenvalue, in his proof, the ``\emph{interior rolling $R$-ball}" condition is used to construct a good cut-off function near the boundary.  It seems that this condition is narutal in studying compact Riemannian manifolds with nonconvex boundary. We refer the reader to an example in \cite{Chen} for the neccesity of this condition in estimating the first Neumann eigenvalue. Later, this condition is also used in \cite{ChenSung, Wang} to derive estimates of the first Stekloff eigenvalue and global heat kernel. Now, we introduce our result.
 \begin{theorem}\label{secondresult}
Let $(M^n, g)$ be a compact Riemannian manifold with boudary $ \partial M $ that satisfies the ``\emph{interior rolling $R$-ball}" condition. Let $K$ and $H$ be nonnegative constants such that the Ricci curvative $Ric_M$ of $M$ is bounded below by $-(n-1)K$ and the second fundamental form $II$ of $ \partial M$ is bounded below $-H$. Suppose that $u\leq D$, for some positive constant $D$, is a positive smooth solution of the equation
\begin{equation}
 \begin{cases}
u_t = \Delta u +au \log u+bu; \\
 \dfrac{\partial u}{\partial \nu}\Bigl\vert_{\partial M}=0.
\end{cases} \label{14}
\end{equation} 
on $M \times (0, \infty)$. By choosing $R$ "small", we have following estimates.
\begin{enumerate}
\item [(i)] If $a > 0$ then
\begin{equation} \label{eq15}
\dfrac{|\nabla u|}{\sqrt{u}} \leq 3 (1+H) \left[ \dfrac{C_1}{R}+ \sqrt{\dfrac{C_2}{R}} + \left(\sqrt{\dfrac{D}{2t}} + C_3 \right)(1+H) \right]. 
\end{equation} 
\item [(ii)] If $a<0$, further assuming that $ \delta \leq u(x,t) \leq D $ for some constant $\delta >0$ then
\begin{equation} \label{eq16}
\dfrac{|\nabla u|}{\sqrt{u}} \leq 3 (1+H) \left[ \dfrac{C_1}{R}+ \sqrt{\dfrac{C_2}{R}} + \left(\sqrt{\dfrac{D}{2t}} + C_4\right)(1+H) \right]. 
\end{equation}
\end{enumerate}
Here, 
\begin{align*}
C_1&= \sqrt[4]{4374 D^2(1+H)^4H^4+ \frac{2}{3} D^2 (17H^2+H)^2};\\
C_2&=\sqrt{\frac{2}{3} \left[ 2D(n-1)H(H+1)(3H+1) \right]^2};\\
C_3&= \sqrt[4]{\frac{2}{3}D^2P^2}, \ P= \max\{2(n-1)K +a(2+\log D)+b, 0\};\\
C_4&= \sqrt[4]{\frac{2}{3} D^2 S^2}, \ S= \max\{2(n-1)K + a(2+ \log \delta)+b, 0 \}.
\end{align*}
\end{theorem}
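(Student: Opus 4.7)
The plan is to apply the parabolic maximum principle to $F := |\nabla u|^2/u$, which is the natural quantity since the conclusion controls $|\nabla u|/\sqrt{u}$. A direct computation combining Bochner's formula with the PDE in \eqref{14}, followed by the Kato-improved inequality $|\nabla^2 u|^2 \geq |\nabla|\nabla u||^2$ recast in the form $2|\nabla^2 u|^2/u \geq F^2/(2u) + \langle \nabla F, \nabla u\rangle/u + u|\nabla F|^2/(2|\nabla u|^2)$, yields the key differential inequality
\[
(\Delta - \partial_t) F + \frac{\langle \nabla F, \nabla u\rangle}{u} \geq \frac{F^2}{2u} - \left[ 2(n-1)K + a(\log u + 2) + b \right] F.
\]
The bracketed coefficient is bounded above by $P$ when $a>0$ (using $u \leq D$ and $\log u \leq \log D$) and by $S$ when $a<0$ (using $u \geq \delta$ and $\log u \geq \log \delta$), explaining the split between cases (i) and (ii).

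Next I would introduce a space-time cutoff $\Phi(x,t) = \phi(x)\psi(t)$, with $\psi$ a standard time bump supplying the $\sqrt{D/(2t)}$ factor, and with $\phi$ built from the interior rolling $R$-ball condition following R. Chen \cite{Chen}. For each boundary point $p$, take the ball $B_q(R/2)$ with $\{p\} = B_q(R/2) \cap \partial M$ and set $\rho(x) = d(x,q)$; since the ball is tangent to $\partial M$ from inside at $p$, the direction $\nabla \rho(p)$ coincides with the outward unit normal $\nu(p)$. Define $\phi(x) = \eta(\rho(x))$ for a smooth decreasing profile $\eta$ supported on $[R/2, R]$ and calibrated so that $|\eta'(R/2)| > 2H \eta(R/2)$. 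The Laplacian comparison theorem under $\Ric \geq -(n-1)K$ then controls $|\nabla \phi|^2/\phi$ and $\Delta \phi$, and these bounds contribute the explicit constants $C_1$ and $C_2$ in the statement.

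Finally I apply the maximum principle to $\Phi F$ on $M \times (0, t]$. At an interior maximum $(x^*, t^*)$, the conditions $\nabla(\Phi F) = 0$, $\Delta(\Phi F) \leq 0$, $(\Phi F)_t \geq 0$ together with the displayed differential inequality produce a quadratic in $(\Phi F)(x^*, t^*)$ whose largest root, after using $u \leq D$ to pass from $F^2/u$ to $F^2/D$, gives the bounds \eqref{eq15} and \eqref{eq16}. A maximum on $\partial M$ must be ruled out: the Neumann condition $\partial u/\partial \nu = 0$ combined with $II \geq -H$ gives $\partial F/\partial \nu = -2 II(\nabla u, \nabla u)/u \leq 2H F$, so the Hopf-type inequality $\partial(\Phi F)/\partial \nu \geq 0$ forces $\partial \phi/\partial \nu \geq -2H \phi$, contradicting the calibration of $\eta$ once $R$ is taken small.

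The hardest step is exactly this boundary calibration: one must select the profile $\eta$ so that $|\eta'(R/2)|$ strictly dominates $2H\eta(R/2)$ while keeping $|\nabla \phi|$ and $\Delta \phi$ small enough to feed cleanly into $C_1$ and $C_2$; the overall prefactor $3(1+H)$ in the conclusion arises precisely from this trade-off between the interior cutoff estimates and the boundary second-fundamental-form correction.
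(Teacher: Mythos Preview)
Your high-level strategy---derive a parabolic differential inequality for a Hamilton-type quantity, multiply by a spatial weight that penalizes proximity to $\partial M$, apply the maximum principle, and exclude a boundary maximum via $II \geq -H$---matches the paper's. But two of your concrete steps would not produce the stated result.

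\textbf{The spatial weight.} You build $\phi(x) = \eta(d(x,q))$ from the center $q$ of a single rolling ball tangent at one chosen $p \in \partial M$; this depends on $p$ and does not define a function on all of $M$. Even locally, your $\eta$ is supported in $[R/2, R]$, so $\phi$ vanishes both at $q$ and outside $B_q(R)$; bounding $\Phi F$ at its maximum then says nothing about $F$ where $\phi = 0$. The paper instead sets $r(x) = d(x,\partial M)$ and takes the \emph{global weight} $\chi(x) = (1 + \psi(r(x)/R))^2$ with $\psi(0)=0$, $\psi'(0)=H$, $0\leq\psi\leq H$, so that $1 \leq \chi \leq (1+H)^2$ everywhere. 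Bounding $t\chi\omega$ at its maximum therefore bounds $\omega$ globally, and the ratio $(1+H)^2$ together with the identity $\omega = h|\nabla h|^2 = \tfrac{1}{9}|\nabla u|^2/u$ (from $h=u^{1/3}$) is exactly where the prefactor $3(1+H)$ comes from---not from a calibration trade-off. The interior rolling $R$-ball condition is used only to ensure that $r$ is smooth on the collar $\{r<R\}$, not to supply the center $q$. At the boundary one has $\partial\chi/\partial\nu = -2H/R$ while the $II$-term contributes at most $2H$, and choosing $R<1$ gives the contradiction; no delicate tuning of a profile $\eta$ is needed.

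\textbf{The Laplacian comparison.} The constants $C_1, C_2$ depend on $H$ and $n$ but \emph{not} on $K$; invoking the Laplacian comparison under $\Ric \geq -(n-1)K$ for $d(\cdot,q)$, as you propose, would inject $K$ into those constants. The paper instead applies Warner's index comparison for distance to a hypersurface to obtain $\Delta r \geq -(n-1)(3H+1)$ on $\partial M(R)$, and it is this $H$-dependent lower bound that produces $C_1$ and $C_2$. The Ricci lower bound enters only through the Bochner step, hence only via $P$ (resp.\ $S$) inside $C_3$ (resp.\ $C_4$).
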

We would like to mention that the proofs of our theorems mainly are followed by methods in \cite{Bri, Chenchen, SZ, Wang, Wu}. In fact, these methods are standard and well-known. They are used in many works (see also \cite{Bri, Chenchen, ChenSung, Dungkhanh, Huangma2010, Huangma2015, Ru, SZ, Wang, Wu, Wup} and the references therein). More precisely, we first estimate the lower bound of the evolution operator acting on suitable function in term of the evolution solution. Then, we construct a good cut-off function and apply maximal principle to prove the desired results. It is also worth to notice that on compact manifolds with or without boundary, gradient estimates of Li-Yau type are well-studied in literature but up to our knowlegment, gradient estimates of Hamilton type have been not investigated. Therefore, in some sense, our gradient estimates in Theorem \ref{secondresult} are new.

The paper is organized as follows. In the section 2, we give a proof of theorem \ref{theomain} and its corollaries. In the section 3, we prove gradient estimates on compact Riemannian manifolds with nonconvex boundary.
\section{Gradient estimates on noncompact smooth metric measure spaces and its applications}
\setcounter{equation}{0}
In this section, we will give a proof of Theorem \ref{theomain}. First of all, we need to have two technique lemmas as follows. To begin with, let us introduce some notations.

Suppose that $u(x,t)$ is a solution of  $(\ref{1})$ and $1 < u \leq D$ with some constant $D>1$ in $Q_{R,T} \equiv B(x_0, R) \times [t_0-T, t_0]$, where $t_0 \in \mathbb{R}$ and $T>0 $. Let 
$$ h(x,t)= \log \dfrac{u(x,t)}{D}. $$
Then \eqref{1} now can be written as 
\begin{equation}\label{4}
(\Delta_f - \dfrac{\partial}{\partial_t})h +  \left\vert\nabla h \right\vert^2 + a (h+\log D)^{\alpha} + b =0.
\end{equation} 
where $a, b$ are real constants. Clearly, $h \leq 0$.

Now, we introduce the first computational lemma.
\begin{lemma}\label{lemma1}
Let $\omega= \left\vert\nabla\log(1-h) \right\vert^2, (x,t) \in Q_{R,T}$.
 \begin{itemize}
\item[(i)] If $\alpha \geq 1$,  then $ \omega$ satisfies
\begin{align}
\left(\Delta_f - \dfrac{\partial}{\partial_t} \right) \omega \geq &\dfrac{2h}{1-h} \left\langle \nabla h, \nabla \omega \right\rangle +2(1-h) \omega^2 -2 \omega \Bigl[(n-1)K+P_1 \Bigr].\notag
\end{align}
where $P_1:=\max \{ 0, a\} \left(\alpha ( \log D)^{\alpha-1} + ( \log D)^{\alpha} \right)   + \max \{0, b\}$
\item[(ii)] If $\alpha < 1$,  then $ \omega$ satisfies
\begin{align}
\left(\Delta_f - \dfrac{\partial}{\partial_t} \right) \omega  \geq & \dfrac{2h}{1-h}  \left\langle\nabla h, \nabla \omega \right\rangle +2(1-h) \omega^2 - 2\omega \Bigl[ (n-1)K + P_2\Bigr].\notag
\end{align}
where $P_2:=\max\{0, a \alpha\} \left\vert(\log u)^{\alpha-1}\right\vert_{\infty} + \max\{0, a\} \left\vert(\log u)^{\alpha}\right\vert_{\infty} + \max\{0, b\}$.
\end{itemize}
\end{lemma}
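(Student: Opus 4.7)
\smallskip

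\noindent\textbf{Proof proposal.} The natural plan is to introduce the auxiliary function $\phi := \log(1-h)$, so that $\omega = |\nabla \phi|^2$, and apply the weighted Bochner identity directly to $\phi$. From $\nabla \phi = -\nabla h/(1-h)$ one checks that
\begin{equation*}
\Delta_f \phi = -\omega - \frac{\Delta_f h}{1-h}, \qquad \phi_t = -\frac{h_t}{1-h}.
\end{equation*}
Substituting \eqref{4} into these gives the clean identity
\begin{equation*}
\paren{\Delta_f - \tfrac{\partial}{\partial t}} \phi = -h\omega + \frac{a(h+\log D)^\alpha + b}{1-h}.
\end{equation*}

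Next I would feed this into the weighted Bochner formula
\begin{equation*}
\paren{\Delta_f - \tfrac{\partial}{\partial t}} |\nabla \phi|^2 = 2|\Hess \phi|^2 + 2\langle \nabla \phi, \nabla \paren{\Delta_f - \tfrac{\partial}{\partial t}}\phi\rangle + 2\Ric_f(\nabla \phi, \nabla \phi),
\end{equation*}
discarding the nonnegative Hessian square and invoking $\Ric_f \geq -(n-1)K$. The only genuine computation is the cross term: expanding the gradient of the right-hand side of the $\paren{\Delta_f-\partial_t}\phi$ identity, pairing against $\nabla \phi = -\nabla h/(1-h)$, and using $|\nabla h|^2 = (1-h)^2 \omega$ produces
\begin{equation*}
(1-h)\omega^2 + \frac{h}{1-h}\langle \nabla h, \nabla \omega\rangle - a\alpha (\log u)^{\alpha-1}\omega - \frac{a(\log u)^\alpha + b}{1-h}\omega,
\end{equation*}
with $\log u = h+\log D$. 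Doubling already yields the advertised $2(1-h)\omega^2$ and the drift term $\tfrac{2h}{1-h}\langle \nabla h, \nabla \omega\rangle$.

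It remains to absorb the last two coefficients into $P_1$ or $P_2$. In case (i), the hypothesis $\alpha\geq 1$ combined with $0 < \log u \leq \log D$ bounds $(\log u)^{\alpha-1} \leq (\log D)^{\alpha-1}$ and $(\log u)^\alpha \leq (\log D)^\alpha$, while $1-h \geq 1$ harmlessly eliminates the denominator; case (ii) requires replacing the pointwise values of $(\log u)^{\alpha-1}$, which may blow up as $u \to 1^+$, by their sup-norms on $Q_{R,T}$, giving exactly the definition of $P_2$.

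The main technical point is the sign bookkeeping: to replace $a\alpha(\log u)^{\alpha-1}$ and $\tfrac{a(\log u)^\alpha+b}{1-h}$ by their positive-part surrogates one must verify $\max\{0, a(\log u)^\alpha + b\} \leq \max\{0,a\}(\log u)^\alpha + \max\{0,b\}$ (and its analogue for the $\alpha$-prefactor) across the four sign combinations of $(a,b)$. This is elementary but has to be checked carefully to recover the precise $P_1, P_2$ stated in the lemma.
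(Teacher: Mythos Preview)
Your proposal is correct and follows essentially the same route as the paper's proof: both apply the weighted Bochner formula to $\phi=\log(1-h)$, compute $(\Delta_f-\partial_t)\phi$ from \eqref{4}, drop the Hessian square, use $\Ric_f\geq -(n-1)K$, and then bound the residual coefficient $a\alpha(\log u)^{\alpha-1}+\tfrac{a(\log u)^\alpha+b}{1-h}$ by $P_1$ or $P_2$ via $1-h\geq 1$ and the sign splittings. The only cosmetic difference is that the paper computes $\Delta_f\log(1-h)$ first and inserts $(\log(1-h))_t$ afterwards, whereas you assemble $(\Delta_f-\partial_t)\phi$ in one step.
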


\begin{proof}
By Bochner - Weitzenbock formula and the assumption $Ric_f\geq -(n-1)K$, we have
\begin{align}
\Delta_f\omega
&=|\nabla^2\log(1-h)|^2+2\left\langle\nabla\Delta_f\log(1-h),\nabla\log(1-h)\right\rangle\notag\\
&\quad\quad\quad+2Ric_f(\nabla(\log(1-h)), \nabla(\log(1-h)))\notag\\
&\geq -2K(n-1)\omega+2\left\langle\nabla\Delta_f\log(1-h),\nabla\log(1-h)\right\rangle.\label{5}
\end{align} 
We calculate directly that
\begin{align}
\Delta_f\log(1-h)&=\dfrac{-\Delta_fh}{1-h}-\omega=\dfrac{|\nabla h|^2+a(h+\log D)^{\alpha}+b-h_t}{1-h}-\omega \notag\\
&=(1-h)\omega+\dfrac{a(h+\log D)^{\alpha}+b}{1-h}+(\log(1-h))_t-\omega\notag\\
&=\dfrac{a(h+\log D)^{\alpha}+b}{1-h}+(\log(1-h))_t-h\omega.\notag
\end{align}
This equality yields
\begin{align}
2&\left\langle\nabla\Delta_f\log(1-h),\nabla\log(1-h)\right\rangle\notag\\
&=2\left\langle\nabla \Bigl(\dfrac{a(h+\log D)^{\alpha} +b }{1-h}+(\log(1-h))_t - h \omega\Bigl),\nabla\log(1-h)\right\rangle\notag\\
&=-2 a \alpha (h+\log D)^{\alpha-1}\omega-2\dfrac{a(h+\log D)^{\alpha}+b}{1-h}\omega \notag\\
&\quad +\omega_t+2(1-h)\omega^2+\dfrac{2h}{1-h}\nabla \omega \nabla h.\notag
\end{align}
Hence, the inequality \eqref{5} implies
\begin{align}
\Delta_f\omega-\omega_t \geq &-2 \omega \left(a \alpha(h+\log D)^{\alpha -1}+\dfrac{a}{1-h} (h+\log D)^{\alpha} +\dfrac{b}{1-h}\right)\notag\\
&+2(1-h)\omega^2+\dfrac{2h}{1-h}\nabla \omega\nabla h-2(n-1)K\omega. \label{6}
\end{align}
Denote $ S:= a \alpha(h+\log D)^{\alpha -1}+\dfrac{a}{1-h} (h+\log D)^{\alpha} +\dfrac{b}{1-h}. $\\
Since $1-h \geq 1$, we have
\begin{itemize}
\item [(i)] 
if $\alpha \geq 1,$\\
$S \leq \max\{0, a \}\left[\alpha(\log D)^{\alpha -1}+ (\log D)^{\alpha} \right] +\max\{0, b\}.$ \notag
\item [(ii)]
if $ \alpha <1,$\\
$S \leq \max\{0, a \alpha \}\left|\alpha(h+\log D)^{\alpha -1}\right|_{\infty}+ \max\{ 0, a\} \left|(h + \log D)^{\alpha}\right|_{\infty} +\max\{0, b\}.$ \notag
\end{itemize}

Combining $(\ref{4})$ and above two cases, the proof Lemma \eqref{lemma1} is followed.
\end{proof}

Next, we intruduce a smooth cut-off function originated by Li-Yau (\cite{LY}).
\begin{lemma}[\cite{SZ, Wu}] 
\label{lemma2}
Fix $t_0\in\mathbb{R}$ and $T>0$. For any give $\tau\in (t_0-T,t_0]$, there exists a smooth function $\bar\psi:[0,+\infty)\times[t_0-T,t_0]\to\mathbb{R}$ satisfying following propositions
\begin{itemize}
\item [(i)] $0\leq \bar{\psi}(r,t)\leq 1
$ in $[0,R]\times[t_0-T,t_0]$, and it is supported in a subset of $[0,R]\times[t_0-T,t_0]$.
\item [(ii)]
$\bar{\psi}(r,t)=1\text{ and }\bar{\psi}_r(r,t)=0
$ in $[0,R/2]\times[\tau,t_0]$ and $[0,R/2]\times[t_0-T,t_0]$, respectively.
\item [(iii)]
$|\bar{\psi}_t|\leq \dfrac{C\bar\psi^{1/2}}{\tau-t_0+T}
$ in $[0,+\infty)\times[t_0-T,t_0]$ for some $C>0$, and $\bar{\psi}(r,t_0-T)=0$ for all $r\in [0,+\infty)$.
\item [(iv)]
$-\dfrac{C_\epsilon\bar{\psi}^{\epsilon}}{R}\leq\bar{\psi}_r\leq 0 \text{ and }|\bar{\psi}_{rr}|\leq \dfrac{C_\epsilon\bar{\psi}^\epsilon}{R^2}
$ in $[0,+\infty)\times[t_0-T,t_0]$ for every $\epsilon \in (0,1]$ with some constant $C_\epsilon$ depending on $\epsilon$.
\end{itemize}
\end{lemma}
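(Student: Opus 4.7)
The natural approach is to construct $\bar\psi$ as a tensor product $\bar\psi(r,t) = \psi_1(r)\psi_2(t)$, handling the spatial and temporal cut-offs independently. The fractional-power bounds in (iii) and (iv) will reduce to the standard observation that a smooth cut-off which vanishes to sufficiently high order at the boundary of its support controls its derivatives by a positive power of itself.

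For the spatial factor I would fix a non-increasing template $\eta \in C^\infty([0,\infty))$ with $\eta \equiv 1$ on $[0,1/2]$, $\eta \equiv 0$ on $[1,\infty)$, and $0 \le \eta \le 1$, and set $\psi_1(r) = \eta(r/R)^{k}$ for an integer $k = k(\epsilon)$ chosen sufficiently large. A direct computation gives $\psi_1'(r) = (k/R)\,\eta(r/R)^{k-1}\eta'(r/R)$, hence $|\psi_1'(r)| \le (k\|\eta'\|_\infty / R)\,\psi_1(r)^{(k-1)/k}$; choosing $k$ so that $(k-1)/k \ge \epsilon$ delivers $|\psi_1'| \le C_\epsilon \psi_1^{\epsilon}/R$. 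The same idea applied to $\psi_1''$, whose lowest power of $\eta$ is $\eta^{k-2}$, gives $|\psi_1''| \le C_\epsilon \psi_1^{\epsilon}/R^2$ once $k \ge 2/(1-\epsilon)$. For the temporal factor I would set $\psi_2(t) = \phi\!\left(\tfrac{t-(t_0-T)}{\tau-(t_0-T)}\right)^2$, where $\phi \in C^\infty([0,\infty),[0,1])$ is non-decreasing, $\phi \equiv 0$ near $0$, and $\phi \equiv 1$ on $[1,\infty)$. The square is essential: it makes $\sqrt{\psi_2} = \phi(\cdots)$ smooth and produces $|\psi_2'(t)| = \tfrac{2|\phi'(\cdots)|}{\tau-t_0+T}\,\sqrt{\psi_2(t)}$, precisely the form required by (iii). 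By construction $\psi_2(t_0-T)=0$ and $\psi_2 \equiv 1$ on $[\tau,t_0]$.

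Assembling $\bar\psi = \psi_1 \psi_2$, properties (i) and (ii) are immediate from the supports and normalisations of the two factors. For (iii), $\bar\psi_t = \psi_1 \psi_2'$ and $\bar\psi^{1/2} = \psi_1^{1/2}\psi_2^{1/2}$, so $|\bar\psi_t|/\bar\psi^{1/2} \le C\,\psi_1^{1/2}/(\tau-t_0+T) \le C/(\tau-t_0+T)$. For (iv), $\bar\psi_r = \psi_1' \psi_2$ and $\bar\psi^{\epsilon} = \psi_1^{\epsilon}\psi_2^{\epsilon}$, so $|\bar\psi_r|/\bar\psi^{\epsilon} = (|\psi_1'|/\psi_1^{\epsilon})\,\psi_2^{1-\epsilon} \le C_\epsilon/R$ since $0 \le \psi_2 \le 1$, and the corresponding $\bar\psi_{rr}$ estimate is analogous. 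The only genuinely delicate point is the power-$\epsilon$ bound in (iv): it demands that $\psi_1$ vanish fast enough at $r=R$ so that its derivative is controlled by the function itself raised to a positive power. This forces the use of the high exponent $\eta^{k}$ in the spatial template and drives the $\epsilon$-dependence of the constant $C_\epsilon$; everything else is routine bookkeeping with the product rule.
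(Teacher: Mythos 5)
The paper itself does not prove this lemma; it is quoted from Souplet--Zhang and Wu, where the construction is precisely the tensor product $\bar\psi(r,t)=\psi_1(r)\psi_2(t)$ you describe, so your approach matches the cited source. Your temporal factor is correct: squaring $\phi$ makes $\sqrt{\psi_2}$ smooth and yields (iii) verbatim, and properties (i), (ii) and the sign condition in (iv) follow immediately from the supports and monotonicity of the two factors.

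There is, however, one genuine gap in (iv), and it is a quantifier issue rather than a computational one. The lemma asserts the existence of a \emph{single} function $\bar\psi$ satisfying the derivative bounds for \emph{every} $\epsilon$, with only the constant $C_\epsilon$ allowed to depend on $\epsilon$; your spatial factor $\psi_1=\eta(\cdot/R)^{k(\epsilon)}$ depends on $\epsilon$ through $k$, so you have produced a different cut-off for each $\epsilon$. Since $\psi_1\le 1$, a fixed $k$ does give the bound for all $\epsilon\le (k-2)/k$, but no finite $k$ covers all $\epsilon<1$ by your power-counting argument alone. The standard fix is to choose the template $\eta$ so that $|\eta'|\le C_a\eta^{a}$ and $|\eta''|\le C_a\eta^{a}$ for every $a\in(0,1)$ (for instance $\eta$ glued from $e^{-1/s}$, which vanishes to infinite order at the edge of its support), after which even $k=1$ works for all $\epsilon<1$ simultaneously. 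Separately, the endpoint $\epsilon=1$ in the statement is unattainable by any nonzero compactly supported $\psi_1$: the bound $|\psi_1'|\le C\psi_1$ together with Gronwall's inequality forces $\psi_1$ to stay positive once it is positive, contradicting compact support; the range should read $\epsilon\in(0,1)$ as in Wu's paper, so this is a defect of the statement rather than of your argument. For the purposes of this paper the gap is harmless, since the proof of Theorem \ref{theomain} only invokes (iv) with $\epsilon=1/2$ and $\epsilon=3/4$, which any fixed $k\ge 8$ covers.
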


Now, we give a proof of Theorem $\ref{theomain}$ .
\begin{proof}[Proof of Theorem $\ref{theomain}$]
We only prove the case (i) $ \alpha \geq 1$. The case (ii) $\alpha <1$ is proved similarly.
Pick any fixed number $\tau\in (t_0-T,t_0]$, choose a cut-off function $\bar\psi(r,t)$ satisfying propositions of Lemma $\ref{lemma2}$. We shall show that the inequality $(\ref{2})$ holds at every point $(x,\tau)$ in $Q_{R/2,T}$. 

To do this, we introduce a cut-off function $\psi: M\times[t_0-T,t_0]\to \mathbb{R}$ such that 
$$
\psi(x, t)=\bar\psi(d(x,x_0),t),
$$
where $x_0 \in M $ is a fixed point. Consider the function $\psi \omega$ in 
$$Q_{R,T}=\{(x,t)\in M \times [t_0-T, \tau]: d(x,x_0) \leq R\}.$$
Since $\psi\omega$ is continuous, it is obtained the maximal value in $Q_{R, T}$. Let $(x_1, t_1)$ be a maximum for $\psi \omega$ in the set $Q_{R, T}$. We may suppose that $(\psi \omega)(x_1, t_1)>0$; otherwise, if $(\psi\omega)(x_1, t_1) \leq 0$ then $(\psi w)(x,\tau) \leq 0 $ for all $x \in B_{x_0}(R)$. However, by the definition of $\psi$, we have $ \psi(x,\tau)=1$ for all $x \in M$ satifying $d(x,x_0) \leq R/2$ This implies $w(x, \tau ) \leq 0 $ when $x \in B_{x_0}(R)$.  Since $\tau$ is arbitrary, the conclusion then follows.

Since $(\psi \omega)(x_1, t_1) >0$, we infer $t_1\neq t_0-T$. Due to the standard argument of Calabi \cite{Calabi}, we may also assume that $(\psi \omega)$ is smooth at $(x_1, t_1)$. Therefore, at $(x_1, t_1)$, we have  
\begin{equation}\label{max}
\nabla(\psi \omega)=0, \ \Delta_f(\psi \omega)\leq 0, \ (\psi \omega)_t\geq 0.
\end{equation}
By Lemma $\ref{lemma1}$, it follows that
\begin{align}
0  \geq & \left(\Delta_f-\dfrac{\partial}{\partial t}\right)(\psi w)-\left\langle\dfrac{2h}{1-h}\nabla h+2\dfrac{\nabla\psi w}{\psi},\nabla(\psi \omega)\right\rangle\notag\\
=&-2[(n-1)K+P_1](\psi w)+2\psi(1-h)\omega^2-\dfrac{2h}{1-h}\left\langle \nabla\psi,\nabla h\right\rangle w\notag\\&+\omega\Delta_f\psi-\omega\psi_t-2\dfrac{|\nabla \psi|^2}{\psi}\omega.\notag
\end{align}
At $(x_1,t_1)$, the inequality can be simplified as
\begin{align}
2\psi(1-h)\omega^2\leq&\dfrac{2h}{1-h}\left\langle\nabla \psi,\nabla h\right\rangle \omega-\omega\Delta_f\psi+2\dfrac{|\nabla\psi|^2}{\psi}\omega\notag\\
&+\omega\psi_t+ 2[(n-1)K+P_1]\psi \omega.\label{7}
\end{align}
Here we used \eqref{max} to obtain \eqref{7}.

\noindent
{\bf Case 1.} If $x_1\in B(x_0,R/2)$ then by our assumption, $\psi $ is constant in space direction in $B(x_0, R/2)$, where $R \geq 2$. So at $(x_1, t_1)$, the inequality $(\ref{7})$ yields
$$
2\psi \omega^2\leq 2\psi(1-h)\omega^2\leq 2[(n-1)K+P_1]\psi \omega+\omega\psi_t.
$$ 
Here we used $h\leq0$. Note that $\psi(x, \tau)=1$ when $ d(x, x_0) < R/2$,, hence using Lemma \ref{lemma2}, for any $x \in B_{x_0}(R)$, the above estimate indeed implies that
\begin{align*} 
\omega(x,\tau)=& \psi(x, \tau)\omega(x, \tau)\\ 
\leq &(\psi\omega)(x_1, t_1)\leq(\psi^{1/2} \omega) (x_1, t_1)\\ 
\leq & \dfrac{\psi_t}{2\psi^{1/2}}+ [(n-1)K+P_1]\psi^{1/2} \\ 
\leq & \dfrac{C}{\tau-t_0+T}+ [(n-1)K+P_1].
\end{align*}
for all $x \in B(x_0, R/2)$. Since $\tau $ can be arbitrarily choosen, we complete the proof.

{\bf Case 2.} Now, we assume $x_1 \notin B(x_0,R/2)$ where $R\geq 2$. Since $Ric_f\geq -(n-1)K$ and $r(x_1,x_0)\geq 1$ in $B(x_0,R)$, the $f$-Laplacian comparison theorem in \cite{Bri} gives
\begin{equation}
\Delta_fr(x_1)\leq \mu +(n-1)K(R-1),\notag
\end{equation}
where $\mu:=\max_{x\in B(x_0,1)}\Delta_fr(x).$
In the following computations, $c$ denotes a constant depending only on $n$ whose value may change from line to line.

Using the above $f$-Laplacian comparison theorem and Lemma \ref{lemma2}, we have
\begin{align}
-\omega\Delta_f\psi=&-\big[\psi_r\Delta_fr+\psi_{rr}|\nabla r|^2\big]\omega\notag\\
\leq&-\big[\psi_r\big(\mu+(n-1)K(R-1)\big)+\psi_{rr}\big]\omega\notag\\
\leq&\omega\psi^{1/2}\dfrac{|\psi_{rr}|}{\psi^{1/2}}+|\mu|\psi^{1/2}\omega\dfrac{|\psi_r|}{\psi^{1/2}}+\dfrac{(n-1)K(R-1)|\psi_r|}{\psi^{1/2}}\psi^{1/2}\omega\notag\\
\leq&\dfrac{1}{8}\psi \omega^2+c\left[\left(\dfrac{|\psi_{rr}|}{\psi^{1/2}}\right)^2+\left(\dfrac{|\mu||\psi_r|}{\psi^{1/2}}\right)^2+\left(\dfrac{K(R-1)|\psi_r|}{\psi^{1/2}}\right)^2\right]\notag\\
\leq&\dfrac{1}{8}\psi \omega^2+c\left(\dfrac{1}{R^4}+\dfrac{|\mu|^2}{R^2}+K^2\right).\label{8}
\end{align}
Now, we use the Young's inequality to estimate the terms of the right hand side of \eqref{7}. First, we have the estimates of first term of the right hand side of \eqref{7}
\begin{align}
\dfrac{2h}{1-h}\left\langle\nabla\psi,\nabla h\right\rangle \omega\leq &2|h||\nabla\psi|\omega^{3/2}\notag\\
=&2\big[\psi(1-h)\omega^2\big]^{3/4}\dfrac{|h||\nabla\psi|}{\big[\psi(1-h)\big]^{3/4}}\notag\\
\leq&\psi(1-h)\omega^2+c\dfrac{(h|\nabla\psi|)^4}{\big[\psi(1-h)\big]^3}\notag\\
\leq&\psi(1-h)\omega^2+c\dfrac{h^4}{R^4(1-h)^3}.\label{9}
\end{align}
For the third term of the right hand side of \eqref{7}
\begin{align}  
2\dfrac{|\nabla\psi|^2}{\psi}\omega 
& =2\psi^{1/2}\omega\dfrac{|\nabla\psi|^2}{\psi^{3/2}}  \notag\\ 
& \leq \dfrac{1}{8}\psi \omega^2+c\left(\dfrac{|\nabla\psi|^2}{\psi^{3/2}}\right)^2 \notag\\
& \leq \dfrac{1}{8}\psi \omega^2+\dfrac{c}{R^4}. \label{10}
\end{align}
For the forth term of the right hand side of \eqref{7} 
\begin{align}
\omega\psi_t=\psi^{1/2}\omega\dfrac{\psi_t}{\psi^{1/2}} 
& \leq \dfrac{1}{8}\psi \omega^2+c\left(\dfrac{\psi_t}{\psi^{1/2}}\right)^2  \notag\\ 
& \leq\dfrac{1}{8}\psi \omega^2+\dfrac{c}{(\tau-t_0+T)^2}. \label{11}
\end{align}
Finally, we estimate the last term 
\begin{equation}
 2[(n-1)K+P_1]\psi \omega\leq \dfrac{1}{8}\psi \omega^2+ c(K^2+(P_1)^2). \label{12}
\end{equation}
Substituting $(\ref{8})$-$(\ref{12})$ into $(\ref{7})$ we get
\begin{align}
2 \psi(1-h)\omega^2\leq& \psi(1-h)\omega^2++\dfrac{1}{2}\psi \omega^2+ \dfrac{h^4c}{R^4(1-h)^3} +\dfrac{c}{R^4}\notag\\
&+\dfrac{\mu^2c}{R^2}+\dfrac{c}{(\tau-t_0+T)^2} + cK^2 + c(P_1)^2. \label{13}
\end{align}
Recall that $1-h \geq 1$, then $(\ref{13})$ implies
\begin{align*}
\psi \omega^2\leq c\left(\dfrac{h^4}{R^4(1-h)^4}+\dfrac{1}{R^4}+\dfrac{\mu^2}{R^2}+\dfrac{1}{(\tau-t_0+T)^2}+K^2+(P_1)^2 \right).
\end{align*} 
Morever, since $\psi(x,\tau)=1$ with $x$ in $B(x_0,R/2)$ and $ h^4 / (1-h)^4 \leq 1$, then
$$
\omega^2(x,\tau)\leq \psi \omega^2(x_1,t_1)\leq c\left(\dfrac{1}{R^4}+\dfrac{\mu^2}{R^2}+\dfrac{1}{(\tau-t_0+T)^2}+K^2+P_1^2 \right).
$$
Since $\omega(x, \tau)$ is defined with $\tau $ arbitrary in $(t_0-T, t_0]$, so for all $(x,t)$ in $Q_{R/2,T}$ we have
$$ \dfrac{|\nabla h|}{(1-h)} (x, t) \leq c\left(\dfrac{1}{R^2}+ \dfrac{|\mu|}{R}+ \dfrac{1}{\sqrt{t-t_0+T}}+ \sqrt{K}+\sqrt{P_1} \right).
$$
Therefore, we have finished the proof of theorem since $h=\log (u/D)$, and hence the proof is complete.
\end{proof}
\begin{proof}[Proof of Theorem \ref{liouville}]Suppose that $u$ is a positive smooth solution of the equation
$$ \Delta_fu+au\log^\alpha u=0. $$
Moreover, $u\leq D$ for some $D>1$. Note that if $b=0, \alpha>0>a$ then $P_1=P_2=0$. Since $u$ is independent of $t$, we may let $t$ tending to infinity in Theorem \ref{theomain}, then letting $R\to\infty$, we obtain, for $K=0$
$$ |\nabla u|=0. $$
Hence, $u$ is constant. This forces $u\equiv1$.
\end{proof}

\begin{corollary} \label{co2.3}
(Harnack - type inequality) Let $(M, g, e^{-f}dv)$ be an $n -$ dimensional complete smooth metric measure space with $Ric_f \geq -(n-1)K$ for some constant $K \geq 0 $ in $B(x_0, R)$, fixed $x_0$ in $M$ and $R \geq 2$. Suppose that $u$ is a smooth solution to equation $(\ref{1})$ satisfies $1 <u \leq D$ with some constant $D>1$. Assume that $\rho =\rho (x_1, x_2)$ is geodesic distance between $x_1$ and $x_2$ for all $x_1, x_2$ in $M$, we have
\begin{enumerate}
\item[(i)] if $\alpha \geq 1$ then
$$ u(x_2, t) \leq u(x_1, t)^{\beta_1} (eD)^{1-\beta_1},$$
where $ \beta_1=exp \left( - \dfrac{c(n) \rho}{\sqrt{t-t_0+R}}-c(n)\sqrt{K} \rho -c(n) \sqrt{P_1} \rho \right)$, $c(n)$ is a constant depending to $n$.
\item[(ii)] if $\alpha < 1$ then
$$ u(x_2, t) \leq u(x_1, t)^{\beta_2} (eD)^{1-\beta_2},$$
where $ \beta_2=exp \left( - \dfrac{c(n) \rho}{\sqrt{t-t_0+R}}-c(n)\sqrt{K} \rho -c(n) \sqrt{P_2} \rho \right)$, $c(n)$ is a constant depending to $n$.
\end{enumerate}
\end{corollary}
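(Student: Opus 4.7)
The plan is to integrate the gradient estimate of Theorem \ref{theomain} along a length-minimizing geodesic between $x_1$ and $x_2$, then exponentiate to convert the additive bound into the multiplicative Harnack relation. The key algebraic observation is that the left-hand side of \eqref{2} is itself a logarithmic gradient: setting $\phi(x,t) := 1+\log(D/u(x,t)) = 1 - h(x,t)$, a direct computation gives
$$
\frac{|\nabla u|}{u\,(1+\log(D/u))} = \frac{|\nabla \phi|}{\phi} = \left|\nabla \log \phi\right|,
$$
so \eqref{2} becomes the pointwise spatial bound $|\nabla \log\phi(x,t)| \leq A(t)$ on $Q_{R/2,T}$, where
$$
A(t) := c(n)\left(\frac{1}{R} + \sqrt{\frac{\mu}{R}} + \frac{1}{\sqrt{t-t_0+T}} + \sqrt{K} + \sqrt{P_1}\right).
$$

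Next, I would fix $t\in(t_0-T,t_0]$ and pick a unit-speed minimizing geodesic $\gamma:[0,\rho]\to M$ from $x_1$ to $x_2$, taken to lie in $B(x_0,R/2)$ so that the gradient bound applies pointwise along it. Integrating $\frac{d}{ds}\log\phi(\gamma(s),t)$ over $[0,\rho]$ and using Cauchy-Schwarz yields
$$
\left|\log\phi(x_2,t) - \log\phi(x_1,t)\right| \leq A(t)\,\rho,
$$
so in particular $\phi(x_2,t) \geq \beta\,\phi(x_1,t)$ with $\beta := e^{-A(t)\rho}$. Substituting $\phi = 1+\log(D/u)$ into this inequality and rearranging gives
$$
\log(D/u(x_2,t)) \geq \beta\,\log(D/u(x_1,t)) + (\beta - 1),
$$
whence, after exponentiating and taking reciprocals, $u(x_2,t) \leq u(x_1,t)^{\beta}(eD)^{1-\beta}$, which is the desired estimate with $\beta_1 := \beta$. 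Part (ii) follows in exactly the same way from case (ii) of Theorem \ref{theomain}, with $P_2$ replacing $P_1$.

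The only real obstacle is bookkeeping: the quantity $A(t)$ contains the geometric terms $\frac{1}{R}$ and $\sqrt{\mu/R}$ which do not appear in the statement's definitions of $\beta_1,\beta_2$. To match the stated form, I would either take $R$ large relative to $\rho$ so that these terms are absorbed into (or dominated by) the $\sqrt{K}$ and $\sqrt{P_i}$ contributions, producing the exponential of the sum of the three surviving terms, or, under a global $Ric_f\geq-(n-1)K$ assumption, let $R\to\infty$ before integrating so that only the $t$-dependent, curvature, and nonlinearity terms survive. Either reading is consistent with the statement, in which $c(n)$ is permitted to change from line to line and depend only on the dimension.
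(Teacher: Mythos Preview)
Your proposal is correct and follows essentially the same route as the paper: the paper also sets $h=\log(u/D)$, integrates $\log(1-h)=\log\phi$ along a minimal geodesic between the two points, and then exponentiates to obtain $u(x_2,t)\le u(x_1,t)^{\beta}(eD)^{1-\beta}$. The only cosmetic difference is that the paper sends $R\to\infty$ in the gradient estimate \emph{before} integrating (so the $1/R$ and $\sqrt{\mu/R}$ terms never appear in the path integral), whereas you carry them through and dispose of them afterward; your second reading---let $R\to\infty$ under a global $Ric_f$ lower bound---is exactly what the paper does.
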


\begin{proof}[Proof of Corollary \ref{co2.3}]
We prove Corollary \ref{co2.3} for the case $\mathrm{(i)}$ \ $\alpha \geq 1$. For the case $\mathrm{(ii)}$ $ \alpha <1$, the proof is similar. By the estimates $(\ref{2})$ and let $R $ tends to infinity, we obtain
$$ \dfrac{|\nabla u|}{u(1+\log \dfrac{D}{u})}  \leq c(n) \left( \dfrac{1}{\sqrt{t-t_0+T} } +\sqrt{K} + \sqrt{P_1} \right).$$
Let $\gamma : [0,1] \to M$ be a minimal geodesic joining $x_1$ and $x_2$ satisfying $\gamma(0)=x_2$ and $\gamma(1)=x_1$. Then 
\begin{align*}
\log\dfrac{1-h(x_1,t)}{1-h(x_2,t)} = & \int\limits_0^1 \dfrac{d \log (1-h(\gamma(s),t))}{ds} ds \\ \leq & \int\limits_0^1 |\dot{\gamma}| \dfrac{|\nabla u|}{u(1+\log \dfrac{D}{u})} ds \\ \leq & c(n) \left( \dfrac{1}{\sqrt{t-t_0+T}} +\sqrt{K} + \sqrt{P_1} \right) \rho.
\end{align*}
Let $\beta_1 = exp \left( - \dfrac{c(n) \rho}{\sqrt{t-t_0+T}}-c(n)\sqrt{K} \rho -c(n) \sqrt{P_1} \rho \right)$, we have 
$$\dfrac{1-h(x_1, t)}{1-h(x_2, t)} \leq \dfrac{1}{\beta_1}.$$
With some easy caculations, we get 
$$ u(x_2, t) \leq u(x_1, t)^{\beta_1} \left( eD \right)^{1-\beta_1}.$$
Hence, Corollary \ref{co2.3} is proved.
\end{proof}
\section{Gradient estimates on compact Riemannian manifolds with non-convex boundary}
Let $M$ be a compact Riemannian manifold with nonconvex boundary. Let $\partial /\partial \nu$ be the outward pointing unit normal vector to $\partial M$ and let $II$ be the second fundamental form of $\partial M$ with respect to $\partial /\partial \nu$. In the seminal paper \cite{LY}, Li--Yau proved that if $M$ is a compact Riemannian manifolds with nonnegative Ricci curvature and the boundary $\partial M$ is convex in the sense that $II\geqslant 0$, then any nonnegative solution $u$ of the evolution equation $\Delta u-\partial_tu=0$ on $M\times(0, +\infty)$ with Neumann boundary condition $\partial_\nu u=0$ satisfies
$$\frac{|\nabla u|^2}{u^2}-\frac{u_t}{u}\leqslant \frac{n}{2t}$$
on $M\times(0, +\infty)$. 
Later, that Li-Yau gradient estimates was generalized to manifolds with non-convex boundary by Chen \cite{Chen} and Wang \cite{Wang}. Here the boundary $\partial M$ is said to be non-convex if there exists a positive constant $H$ such that the second fundamental form $II$ is bounded from below by $-H$. Due to the non-convexity of the boundary, we have to control some technical computations since the estimates necessarily involve the second fundamental form of $\partial M$. As in \cite{Chen, Wang}, we need to construct a good cut-off function near $\partial M$, this turns out that the ``\emph{interior rolling $R$-ball condition}" are neccesary added. Furthermore, we would like to note that the interior rolling $R$-ball condition is a geometric condition on the boundary $\partial M$ to ensure that the first Neumann eigenvalue is bounded away from zero (see \cite{Chen}) and that the second fundamental form is bounded from above (see \cite{ChenSung}).  

Assume that $u$ is a positive solution of the evolution equation 
\begin{equation}\label{ecom} 
u_t=\Delta u+au\log u+bu, 
\end{equation}
where $a, b\in\mathbb{R}$ are fixed constant.
To prove the theorem \ref{secondresult}, as in \cite{Jiang, Wu}, let 
$$h(x,t):= u^{1/3}(x,t).$$
Then the equation \eqref{ecom} becomes
\begin{equation}
 h_t =\Delta h + 2 h^{-1} |\nabla h|^2 +ah \log h + \dfrac{b}{3} h \label{16}
\end{equation}
Using the same strategy as in the proof of Theorem \ref{theomain}, we first derive the following computational lemma. 
\begin{lemma} \label{lemma3}
Let $ \omega:= h |\nabla h|^2 $. For any $(x,t) \in M \times (0, \infty)$, 
\begin{enumerate}
\item [(i)] if $a \geq 0, $ then $\omega$ satifies
$$ (\Delta - \partial_t)\omega \geq \dfrac{9}{2} h^{-3} \omega^2 -3 h^{-1} \left\langle \nabla \omega, \nabla h \right\rangle- [2(n-1)K + a\log D +2a +b]\omega.$$
\item [(ii)] if $a<0$, further assuming that  $0< \delta \leq u(x,t) \leq D$ for some constant $\delta >0$, then $\omega$ satisfies
$$ (\Delta - \partial_t)\omega \geq \dfrac{9}{2} h^{-3} w^2 -3 h^{-1} \left\langle \nabla \omega, \nabla h\right\rangle - [2(n-1)K + a \log \delta +2a +b]\omega.$$
\end{enumerate}
\end{lemma}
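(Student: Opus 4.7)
The plan is to carry out a direct Bochner--Weitzenb\"ock computation on $\omega:=h|\nabla h|^{2}$, using the reformulated equation \eqref{16} for $h=u^{1/3}$. Applying the product rule
\[
\Delta\omega=(\Delta h)|\nabla h|^{2}+2\langle\nabla h,\nabla|\nabla h|^{2}\rangle+h\,\Delta|\nabla h|^{2},
\]
I would expand $\Delta|\nabla h|^{2}$ by the classical Bochner formula and similarly compute $\omega_{t}=h_{t}|\nabla h|^{2}+2h\langle\nabla h,\nabla h_{t}\rangle$. Substituting $\Delta h-h_{t}=-2h^{-1}|\nabla h|^{2}-ah\log h-(b/3)h$ (together with its gradient) from \eqref{16}, and collapsing the logarithmic pieces via the identity $3\log h=\log u$, a routine collection yields
\[
(\Delta-\partial_{t})\omega=2h^{-1}|\nabla h|^{4}-2\langle\nabla|\nabla h|^{2},\nabla h\rangle+2h|\Hess h|^{2}+2h\Ric(\nabla h,\nabla h)-\bigl[a\log u+2a+b\bigr]\omega.
\]

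Next I would estimate the Ricci term via $\Ric\ge-(n-1)K$ to get $2h\Ric(\nabla h,\nabla h)\ge-2(n-1)K\omega$, and bound the logarithm according to the sign of $a$. For case (i) with $a\ge 0$ and $u\le D$, $a\log u\le a\log D$; for case (ii) with $a<0$ and $u\ge\delta$, $a\log u\le a\log\delta$. In both cases the multiplicative constant in front of $\omega$ matches the one stated in the lemma.

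The heart of the argument is to repackage the three principal terms $2h^{-1}|\nabla h|^{4}$, $-2\langle\nabla|\nabla h|^{2},\nabla h\rangle$ and $2h|\Hess h|^{2}$ into the form $\tfrac{9}{2}h^{-3}\omega^{2}-3h^{-1}\langle\nabla\omega,\nabla h\rangle$. From $\nabla\omega=|\nabla h|^{2}\nabla h+2h\,\Hess(h)(\nabla h,\cdot)^{\sharp}$ one derives
\[
h^{-1}\langle\nabla\omega,\nabla h\rangle=h^{-1}|\nabla h|^{4}+2\Hess(h)(\nabla h,\nabla h), \qquad -2\langle\nabla|\nabla h|^{2},\nabla h\rangle=-2h^{-1}\langle\nabla\omega,\nabla h\rangle+2h^{-1}|\nabla h|^{4}.
\]
Noting $h^{-1}|\nabla h|^{4}=h^{-3}\omega^{2}$, these two substitutions already account for $4h^{-3}\omega^{2}-2h^{-1}\langle\nabla\omega,\nabla h\rangle$; the remaining $\tfrac{1}{2}h^{-3}\omega^{2}$ and the extra $-h^{-1}\langle\nabla\omega,\nabla h\rangle$ have to be squeezed out of the Hessian term.

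The delicate step is a weighted Young-type bound: Cauchy--Schwarz gives $|\Hess(h)(\nabla h,\nabla h)|\le|\Hess h|\,|\nabla h|^{2}$, and then the inequality $2|\Hess h|\,|\nabla h|^{2}\le 2h|\Hess h|^{2}+\tfrac{1}{2}h^{-1}|\nabla h|^{4}$ (where the choice of weight $2h$ is what calibrates the final coefficients) yields
\[
2\Hess(h)(\nabla h,\nabla h)\ge -2h|\Hess h|^{2}-\tfrac{1}{2}h^{-3}\omega^{2}.
\]
Plugging this in converts exactly one extra $-h^{-1}\langle\nabla\omega,\nabla h\rangle$ at the price of $\tfrac{1}{2}h^{-3}\omega^{2}$, so the coefficients $\tfrac{9}{2}$ and $3$ fall out on the nose. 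I expect the main obstacle to be precisely this bookkeeping: a different weight in the Young step, or any miscount of powers of $h$ in the Bochner expansion, immediately spoils the constant $\tfrac{9}{2}$ and hence the sharpness that the maximum-principle argument of Theorem \ref{secondresult} will rely upon.
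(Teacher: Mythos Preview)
Your argument is correct, and the bookkeeping in the last step does close: the inequality you need is exactly
\[
\tfrac{1}{2}h^{-1}|\nabla h|^{4}+2\Hess(h)(\nabla h,\nabla h)+2h|\Hess h|^{2}\ge 0,
\]
which follows from Cauchy--Schwarz together with $2|\Hess h|\,|\nabla h|^{2}\le 2h|\Hess h|^{2}+\tfrac{1}{2}h^{-1}|\nabla h|^{4}$, precisely as you outline.

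The paper, however, reaches the same conclusion by a shorter route. It observes that with $v:=\tfrac{2}{3}h^{3/2}$ one has $\omega=|\nabla v|^{2}$, so the Bochner--Weitzenb\"ock formula applied \emph{directly to $v$} gives
\[
\Delta\omega\ge 2\Ric(\nabla v,\nabla v)+2\langle\nabla\Delta v,\nabla v\rangle,
\]
where the Hessian term $2|\nabla^{2}v|^{2}$ is simply dropped as nonnegative. Since $\Delta v=h^{1/2}\Delta h+\tfrac{1}{2}h^{-1/2}|\nabla h|^{2}$, substituting \eqref{16} for $\Delta h$ and expanding $2\langle\nabla\Delta v,h^{1/2}\nabla h\rangle$ produces $\omega_{t}+\tfrac{9}{2}h^{-3}\omega^{2}-3h^{-1}\langle\nabla\omega,\nabla h\rangle-a(2+3\log h)\omega-b\omega$ on the nose. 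The coefficients $\tfrac{9}{2}$ and $3$ arise automatically from the derivatives of $h^{3/2}$ and $h^{-3/2}$; no weighted Young inequality is needed. In effect, the exponent $3/2$ is chosen precisely so that $h|\nabla h|^{2}$ becomes the squared norm of a gradient, which is the standard trick behind the substitution $h=u^{1/3}$.

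Your approach has the virtue of not requiring one to spot this substitution, and it makes transparent exactly where an inequality (rather than an identity) is used; the paper's approach hides the same loss inside the discarded $|\nabla^{2}v|^{2}$ term but gets the constants for free.
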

 
\begin{proof}
For any smooth function $v$, the Bochner - Weitzenb\"{o}ck formula gives
$$ \Delta |\nabla v|^2 \geq 2|\nabla^2 v|^2+ 2 Ric \left\langle \nabla v, \nabla v \right\rangle+2 \left\langle \nabla \Delta v, \nabla v \right\rangle. $$
Considering the function $v = \dfrac{2}{3}h^{3/2} $ then  $\omega= \left\vert \nabla v \right\vert^2$. The assumption $Ric \geq -(n-1)K $ and the Bochner - Weitzenb\"{o}ck formular imply
\begin{equation*}
\Delta \omega \geq -2(n-1)K \omega + 2 \left\langle \nabla\Delta \left( \dfrac{2}{3} h^{3/2} \right), \nabla \left( \dfrac{2}{3} h^{3/2} \right) \right\rangle.
\end{equation*} 
By $(\ref{16})$ and a direct computation shows that
\begin{align*}
\Delta\left(\dfrac{2}{3} h^{3/2} \right) 
& = h^{1/2} \Delta h + \dfrac{1}{2} h^{-1/2} \left\vert \nabla h \right\vert^2 \\ 
& = h^{1/2} h_t -\dfrac{3}{2} h^{-3/2} \omega -ah^{3/2} \log h - \dfrac{b}{3} h^{3/2}.
\end{align*} 
This equality yields
\begin{align}
2 & \left\langle \nabla\Delta \left( \dfrac{2}{3} h^{3/2} \right), \nabla \left( \dfrac{2}{3} h^{3/2} \right) \right\rangle \\
 & =  2 \left\langle \nabla \left( h^{1/2} h_t -\dfrac{3}{2} h^{-3/2} \omega -ah^{3/2} \log h - \dfrac{b}{3} h^{3/2} \right), h^{1/2}\nabla h \right\rangle\notag \\
 &=  \omega_t +\dfrac{9}{2} h^{-3} \omega^2 -3h^{-1} \left\langle \nabla \omega, \nabla h\right\rangle -a\omega\left(2+ 3 \log h \right) -b\omega.\label{eli}
\end{align}
Thus, we obtain
$$ \Delta \omega -\omega_t \geq \dfrac{9}{2} h^{-3} \omega^2 -3h^{-1} \left\langle \nabla \omega, \nabla h\right\rangle - \omega\left[ 2(n-1)K +a\left(2+ 3 \log h \right)+ b\right].$$
To conclude the proof, we notice that the following two cases hold true. 
\begin{itemize}
\item If $a \geq 0$ and $0<h \leq D^{1/3}$, then $\log h \leq 1/3 \ \log D .$
\item If $a<0$ and $ \delta^{1/3} \leq h \leq D^{1/3}$, then $ 1/3 \log \delta \leq \log h \leq 1/3 \log D.$ 
\end{itemize}
Combining the above two cases and the inequality \eqref{eli}, we obtain the desired results and the Lemma is proved completely.
\end{proof}
\begin{proof} [Proof of Theorem \ref{secondresult}]
We only consider the case $a \geq 0$. The case $a <0$ is proved similarly.

Suppose $ r(x)$ is the distance from $x \in M $ to $\partial M$. As in \cite{Chen}, we define a function on $M$ by $ \phi (x)= \psi \left(\dfrac{r(x)}{R} \right),$ where $ \psi (x)$ is nonnegative ${\mathcal C}^2$-function defined on $[0, \infty]$ such that 
$$ \begin{cases}
\psi(r) \leq H&\text{ if }  0 \leq r < 1/2,\\
 \psi (r)=H&\text{ if }   1 \leq  r < \infty.
\end{cases} $$
with  $\psi (0)=0, \ \ 0 \leq \psi'(r)  \leq 2H, \ \ \psi'(0)=H$ and $\psi''(r) \geq -H$. Put
$$\chi(x) = \left(1+\phi(x) \right)^2 $$
and let 
$$F(x, t)= t \chi \omega=t \chi  h |\nabla h|^2. $$
For any fixed $T < \infty$, $F(x,t)$ is continuous on $\overline{M} \times [0, T]$. We can suppose that $(x_0, t_0) \in \overline{M} \times [0, T] $ be a maximum point for the function $F$. If $F(x_0, t_0)=0$ then the right hand side of \eqref{eq15} and of \eqref{eq16} is zero at $(x, T) \in M \times (0, T]$. Since $T $ is arbitrary, the inequalities in Theorem \ref{secondresult} are trivial. Hence, we can assume that $F(x_0, t_0) >0$. Consequently, $t_0\not=0$.

If $x_0 \in \partial M$ then $\dfrac{\partial F}{ \partial \nu} (x_0, t_0) \geq 0 $. Let $e_1, e_2, \cdots, e_n$ be an orthormal frame at $x_0$ with $e_n = \nu.$ We have
$$ 0 \leq \dfrac{\partial F}{\partial \nu} (x_0, t_0) = t_0 \left( \dfrac{\partial \chi (x_0)}{\partial \nu} h |\nabla h|^2 + \chi(x_0) h_{\nu} |\nabla h|^2 + \chi (x_0) 2 h  \sum\limits_{i=1}^n h_i h_{i \nu} \right).$$
Further, $h_n = h_{\nu} = \dfrac{\frac{\partial u}{\partial \nu} }{3 u^{2/3}} = 0$ on $\partial M.$
Since $t_0>0$, we get 
$$ \dfrac{\partial \chi(x_0)}{\partial \nu} \dfrac{1}{\chi(x_0)} + \dfrac{ 2 \sum\limits_{i=1}^n h_i h_{i \nu} }{|\nabla h|^2} \geq 0.$$
On the other hand, by a direct computation, one shows that
$$ \sum\limits_{i=1}^n h_i h_{i \nu} = \left\langle \nabla h, \left( \nabla h\right)_{\nu} \right\rangle = -II \left\langle \nabla h, \nabla h \right\rangle \leq H |\nabla h|^2$$
and 
$$ \dfrac{\partial \chi(x_0)}{\partial \nu} \dfrac{1}{\chi(x_0)} =-\dfrac{2H}{R}.$$
If we choose $R<1$ then  
$$\dfrac{\partial \chi(x_0)}{\partial \nu} \dfrac{1}{\chi(x_0)} + \dfrac{ 2  \sum\limits_{i=1}^n h_i h_{i \nu} }{|\nabla h|^2} \leq -\dfrac{2H}{R} + 2H<0.$$
This is contradiction.

Now, we can assume that F achieves its at $ (x_0, t_0) \in (\overline{M} \setminus \partial M) \times [0, T]$  and at $(x_0, t_0)$ then $ \nabla F =0, \dfrac{\partial F}{\partial t} \geq 0, \Delta F \leq 0$. We infer at $(x_0, t_0)$ 
\begin{align*}
 0  \geq   \Delta F - F_t  
 & = t_0\omega \Delta \chi  + t_0 \chi \Delta \omega + 2t_0 \left\langle \nabla \chi, \nabla \omega\right\rangle - t_0 \chi \omega_t - \chi \omega \\ 
 & =  t_0 \chi \left( \Delta \omega - \omega_t \right) + t_0\omega \Delta \chi +2t_0 \left\langle \nabla \chi, \nabla \omega \right\rangle- \chi \omega.
\end{align*}
By Lemma $\ref{lemma3}$, we obtain 
\begin{align}
 0  \geq  & \  \Delta F - F_t \notag\\
  \geq  &  t_0 \chi \left( \dfrac{9}{2} h^{-3} \omega^2 -3 h^{-1} \left\langle \nabla h,  \nabla \omega \right\rangle - \omega P \right) + t_0\omega \Delta \chi 
+ \ 2t_0 \left\langle \nabla  \chi, \nabla \omega\right\rangle - \chi \omega.\label{de1}
\end{align}
Since $ 0= \nabla F = t_0\omega \nabla \chi + t_0 \chi\nabla \omega$, we have at $(x_0, t_0)$ 
\begin{equation}\label{de2} \begin{cases}
\left\langle \nabla \chi, \nabla \omega \right\rangle = - \dfrac{ |\nabla \chi|^2}{\chi } \omega \geq -16\dfrac{H^2}{R^2} \omega ,\\
\left\langle \nabla \omega, \nabla h \right\rangle = - \left\langle \dfrac{ \nabla \chi}{\chi }, \nabla h \right\rangle \omega.
\end{cases} 
\end{equation}
We let $ \partial M (R) = \{ x \in M \bigl| r(x) \leq R\}$, by using an index comparision theorem in \cite{Wa} (see also \cite{Chen}), we have 
$$ \Delta r(x) \geq -(n-1)(3H+1)$$
for $x\in \partial M(R).$ Therefore,
\begin{align*} 
\Delta \phi 
&= \dfrac{1}{R} \psi' \Delta r + \dfrac{1}{R^2} \psi'' |\nabla r|^2\\
& \geq -\dfrac{2(n-1)H(3H+1)}{R} - \dfrac{H}{R^2}.
\end{align*}
With the help of this Laplacian comparison, we can estimate 
\begin{align} 
\Delta \chi 
&= 2(1+ \phi) \Delta \phi + 2 |\nabla \phi |^2 \notag\\
&\geq 2(1+H) \left( - \dfrac{2(n-1)H(3H+1)}{R} -\dfrac{H}{R^2} \right).\label{de3}
\end{align}
Plugging \eqref{de2}, \eqref{de3} into \eqref{de1}, we obtain
\begin{align*}
0  
\geq & \dfrac{9}{2} t_0 \omega^2 \chi h^{-3}  + 3 t_0 \omega h^{-1} \left\langle \nabla \chi, \nabla h \right\rangle  - P (1+H)^2 t_0 \omega \\ 
& + t_0 \omega 2(1+H)\left( - \dfrac{2(n-1)H(3H+1)}{R} - \dfrac{H}{R^2} \right) - 32t_0\omega \dfrac{H^2}{R^2} - (1+H)^2 \omega \\ 
\geq & \dfrac{9}{2} t_0 \omega^2 h^{-3} + 3 t_0\omega h^{-1} \left\langle \nabla \chi, \nabla h \right\rangle  - (1+H)^2 \omega  \\ 
& -2 t_0 \omega \left[ \dfrac{P(1+H)^2 }{2}  +(1+H)\left( \dfrac{2(n-1)H(3H+1)}{R} +\dfrac{H}{R^2} \right) +16\dfrac{H^2}{R^2} \right] \\  
\geq & \dfrac{9}{2} t_0 \omega^2 h^{-3} +3t_0 \omega h^{-1} \left\langle \nabla \chi, \nabla h \right\rangle -(1+H)^2 \omega -2t_0 \omega Q,
\end{align*}
where 
$$Q:= \dfrac{P(1+H)^2}{2} +\dfrac{2(n-1)H(H+1)(3H+1)}{R} +\dfrac{17H^2+H}{R^2}.$$
Multipliting two hand side of the above inequality with $ h^3$, we have 
\begin{equation} \label{17}
9t_0\omega^2 \leq -6t_0\omega h^2 \left\langle \nabla \chi, \nabla h \right\rangle +2 (1+H)^2 \omega h^3 + 4 Q t_0 \omega h^3.
\end{equation} 
Apply the Young's inequality, we obtain the following two estimates 
\begin{enumerate}
\item[ (i)] 
\begin{align} \label{18}
-6t_0\omega h^2 \left\langle \nabla \chi ,\nabla h \right\rangle 
& \leq 6t_0 h^{3/2} \left\vert \nabla  \chi \right\vert \omega^{3/2} \notag \\ 
& =\leq \dfrac{8}{3}t_0 \left[\omega^{3/2} \dfrac{9}{4} D^{1/2} \left\vert \nabla  \chi \right\vert \right] \notag \\ 
& \leq 2t_0\omega^2 +4374  \dfrac{t_0D^2 H^4(1+H)^4}{R^4}.
\end{align}
\item[(ii)]
\begin{align} \label{19}
2 (1+H)^2 \omega h^3 
& =  4 \left[ \sqrt{t_0}\omega  \dfrac{h^3(1+H)^2}{2 \sqrt{t_0}} \right] \notag \\  
& \leq 2t_0 \omega^2 + \dfrac{(1+H)^4 h^6}{2t_0} \notag \\ 
& \leq 2t_0\omega^2 +\dfrac{(1+H)^4 D^2 }{2t_0}.
\end{align}
\end{enumerate}
Finally, we use Cauchy's inequality to obtain
\begin{align} \label{20}
4t_0\omega Qh^3  & \leq 2t_0\omega^2+ 2t_0 Q^2 D^2.
\end{align}
Plugging the \eqref{18} - \eqref{20} into \eqref{17}, we infer at $(x_0,t_0)$ 
$$ 9t_0\omega^2 \leq 8t_0\omega^2 + 4374 \dfrac{ t_0D^2 (1+H)^4 H^4}{R^4}+ \dfrac{(1+H)^4 D^2}{2t_0} + 2 t_0 Q^2 D^2 $$
Consequently,
\begin{align*}
\omega^2  \leq 
& 4374 \dfrac{D^2(1+H)^4H^4}{R^4}  +\dfrac{D^2 (1+H)^4}{2t_0^2} + 2Q^2 D^2 \\  
= & \dfrac{1}{R^4} \left[4374 D^2(1+H)^4H^4+ \frac{2}{3} D^2 (17H^2+H)^2 \right]  +\dfrac{D^2 (1+H)^4}{2t_0^2} \\
& +  \dfrac{2}{3}D^2P^2(1+H)^4 + \dfrac{2}{3R^2}  \left[ 2D(n-1)H(H+1)(3H+1) \right]^2\\ 
= & \dfrac{(C_1)^4}{R^4}+ \dfrac{(C_2)^2}{R^2}+\dfrac{D^2 (1+H)^4}{2t_0^2}+  (C_3)^4(1+H)^4.
\end{align*} 
Therefore, for any $x \in \overline{M}$,
\begin{align*}
T\omega(x,T) 
\leq & T(1+\phi(x))^2 w(x, T) \\
\leq & t_0(1+\phi(x_0))^2 w(x_0, t_0) \\
\leq & (1+H)^2 \left[ t_0 \left( \dfrac{(C_1)^2}{R^2}+ \dfrac{C_2}{R} \right) +\dfrac{D(1+H)^2}{\sqrt{2}} + t_0(C_3)^2(1+H)^2 \right]  \\
\leq & (1+H)^2 \left[T \left( \dfrac{(C_1)^2}{R^2}+ \dfrac{C_2}{R} \right) +\dfrac{D(1+H)^2}{\sqrt{2}} +T (C_3)^2(1+H)^2 \right] .
\end{align*}
This implies that 
$$w(x, T) \leq (1+H)^2 \left[ \dfrac{(C_1)^2}{R^2} + \dfrac{C_2}{R} + \dfrac{D(1+H)^2}{\sqrt{2}T} + (C_3)^2(1+H)^2\right].$$
Since $T$ is arbitrary, the proof is complete.
\end{proof}
\begin{remark}As in \cite{Chen, Wang}, in our estimate, the notation $R$ is "small" maens $R$ is chosen to be a positive constant less than $1$. Moreover, $R$ depends on the upper bound of the sectional curvature of the
manifold near the boundary. More precisely, the upper bound of $R$ is determined by
$$ \sqrt{K_R}\tan(R\ \sqrt{K_R})\leq\frac{H}{2}+\frac{1}{2} $$
and
$$ \frac{H}{\sqrt{K_R}}\tan(R\ \sqrt{K_R})\leq\frac{1}{2}, $$
where $K_R$ is the upper bound of the sectional curvature on the set $M_R=\left\{x\in M: r(x)\leq R\right\}$.
\end{remark}
As applications, first we have following gradient estimates.
 \begin{theorem}\label{main3}
Let $(M^n, g)$ be a compact Riemannian manifold with convex boudary $ \partial M $ that satisfies the " interior rolling $R$-ball" condition. Let $K$ be nonnegative constant such that the Ricci curvative $Ric_M$ of $M$ is bounded below by $-K$. Suppose that $u\leq D$, for some positive constant $D$, is a positive smooth solution of the equation
\begin{equation}
 \begin{cases}
u_t = \Delta u +au \log u+bu; \\
 \dfrac{\partial u}{\partial \nu}\Bigl\vert_{\partial M}=0.
\end{cases} \label{14}
\end{equation} 
on $M \times (0, \infty)$. By choosing $R$ "small", we have following estimates.
\begin{enumerate}
\item [(i)] If $a > 0$ then
\begin{equation} \label{e15}
\dfrac{|\nabla u|}{\sqrt{u}} \leq 3\left(\sqrt{\dfrac{D}{2t}} + C_3 \right). 
\end{equation} 
\item [(ii)] If $a<0$, further assuming that $ \delta \leq u(x,t) \leq D $ for some constant $\delta >0$ then
\begin{equation} \label{e16}
\dfrac{|\nabla u|}{\sqrt{u}} \leq 3\left(\sqrt{\dfrac{D}{2t}} + C_4\right). 
\end{equation}
\end{enumerate}
Here 
\begin{align*}
C_3&= \sqrt[4]{\frac{2}{3}D^2P^2}, \ P= \max\{2K +a(2+\log D)+b, 0\};\\
C_4&= \sqrt[4]{\frac{2}{3} D^2 S^2}, \ S= \max\{2K + a(2+ \log \delta)+b, 0 \}.
\end{align*}
\end{theorem}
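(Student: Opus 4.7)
The plan is to recognize that Theorem \ref{main3} is the specialization of Theorem \ref{secondresult} to the case of a convex boundary. Convexity of $\partial M$ means that the second fundamental form satisfies $II \geq 0$, so we may take $H = 0$ in the hypotheses of Theorem \ref{secondresult}. With $H = 0$, the constants $C_1$ and $C_2$ defined there vanish identically (every summand in the radicands carries a positive power of $H$), and the prefactors $(1+H)$ reduce to $1$. Specializing the estimates \eqref{eq15} and \eqref{eq16} therefore yields exactly \eqref{e15} and \eqref{e16}, noting that the $K$ of Theorem \ref{main3} plays the role of $(n-1)K$ in Theorem \ref{secondresult}, so that $P$ and $S$ are unchanged under this renaming.

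For a self-contained proof, I would repeat the argument of Theorem \ref{secondresult} but drop the cut-off function $\chi$ entirely, working directly with $F(x,t) := t\, h\, |\nabla h|^2$ on $\overline{M}\times [0,T]$, where $h = u^{1/3}$ (so that Lemma \ref{lemma3} applies to $\omega := h|\nabla h|^2 = \tfrac{1}{9}|\nabla u|^2/u$). The convexity assumption makes the boundary analysis transparent: at any maximum point $(x_0, t_0)$ with $x_0 \in \partial M$, the Neumann condition $h_\nu = 0$ yields
$$\frac{\partial F}{\partial \nu}(x_0,t_0) = 2 t_0 h\,\langle\nabla h,(\nabla h)_\nu\rangle = -2 t_0 h\, II(\nabla h,\nabla h) \leq 0,$$
while maximality forces $\partial F/\partial\nu \geq 0$. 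Hence either $|\nabla h|(x_0,t_0) = 0$ (in which case the claim is trivial) or the maximum is attained in the interior.

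At an interior maximum, the relations $\nabla F = 0$, $\Delta F \leq 0$, $F_t \geq 0$ combined with Lemma \ref{lemma3}(i) and Young's inequality (mimicking \eqref{18}--\eqref{20} in the proof of Theorem \ref{secondresult}, but with all $\nabla\chi$ and $\Delta\chi$ contributions absent) lead to an estimate of the form
$$\omega^2(x_0,t_0) \leq \frac{D^2}{2\, t_0^{\,2}} + \frac{2}{3} D^2 P^2.$$
Since $T\,\omega(x,T) \leq t_0\, \omega(x_0, t_0)$ for all $x \in \overline{M}$, taking square roots and using $|\nabla u|/\sqrt{u} = 3\sqrt{\omega}$ together with subadditivity of the square root yields \eqref{e15}. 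Case (ii) is identical once Lemma \ref{lemma3}(ii) replaces $P$ with $S$. The main obstacle is purely bookkeeping: verifying rigorously that the $\chi$-free computation reproduces precisely the constants $C_3$ and $C_4$, and handling the boundary case cleanly through the Hopf-type argument above.
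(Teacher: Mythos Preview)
Your proposal is correct and matches the paper's proof exactly: the paper simply observes that convexity of $\partial M$ gives $H=0$, hence $C_1=C_2=0$, and then invokes Theorem~\ref{secondresult} directly. Your additional self-contained sketch (dropping $\chi$ and running the maximum-principle argument on $F=t\,h|\nabla h|^2$) is not in the paper but is a sound alternative; just note that at a boundary maximum the convexity argument only forces $\partial_\nu F=0$ rather than a strict sign, so one should observe that in that case the full first- and second-order conditions still hold and the interior computation goes through unchanged.
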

\begin{proof}Since $\partial M$ is convex, we have $H=0$. Therefore, $C_1=C_2=0$. The proof of Theorem \ref{main3} is followed directly by Theorem \ref{secondresult}.
\end{proof}
\begin{corollary}
Let $(M^n, g)$ be a compact Riemannian manifold with convex boudary $ \partial M $ that satisfies the " interior rolling $R$-ball" condition. Suppose that the Ricci curvature is nonnegative and $u$ is a positive smooth solution of the equation
\begin{equation}
 \begin{cases}
\Delta u +au \log u+bu=0 \\
 \dfrac{\partial u}{\partial \nu}\Bigl\vert_{\partial M}=0.
\end{cases} \label{14}
\end{equation} 
on $M \times (0, \infty)$. By choosing $R$ "small", we have following estimates.
\begin{enumerate}
\item [(i)] If $a > 0\geq b$ then $u\geq e^{-2}$.
\item [(ii)] If $a<0$ and $b\leq0$, further assuming that $ e^{-2} \leq u(x,t) \leq D $ then $u$ is constant. Consequently, $u=e^{-b/a}$.
\end{enumerate}
\end{corollary}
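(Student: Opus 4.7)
My plan is to regard the elliptic solution $u$ as a time-independent ($u_t\equiv 0$) solution of the parabolic equation $u_t = \Delta u + au\log u + bu$ on $M\times(0,\infty)$ with the same Neumann boundary condition, and then simply quote Theorem~\ref{main3}. In the present setup $\Ric\ge 0$ gives $K=0$ and the convexity of $\partial M$ gives $H=0$, so the constants $C_1$ and $C_2$ in Theorem~\ref{main3} both vanish (this is the content of Theorem~\ref{main3}). Letting $t\to\infty$ kills the $\sqrt{D/(2t)}$ term, and the gradient estimate reduces in both parts to
$$
\frac{|\nabla u|}{\sqrt u}\;\le\; 3\,C_3 \quad\text{(case (i))}\qquad\text{or}\qquad \frac{|\nabla u|}{\sqrt u}\;\le\; 3\,C_4 \quad\text{(case (ii))}.
$$

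For case (ii) I would use the hypothesis $u\ge e^{-2}$ to put $\delta = e^{-2}$ in the formula for $S$. A one-line computation then gives $S = \max\{a(2+\log e^{-2})+b,0\} = \max\{b,0\} = 0$, since $b\le 0$, hence $C_4 = 0$. The gradient estimate forces $|\nabla u|\equiv 0$, so $u$ is a positive constant on the connected manifold $M$. Substituting a constant back into the equation reduces it to $u(a\log u+b)=0$, and $u>0$ forces $u\equiv e^{-b/a}$.

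For case (i) I would argue by contradiction: assume $u\le e^{-2}$ everywhere so that we may take $D=e^{-2}$ in the formula for $P$. Then $P=\max\{a(2+\log e^{-2})+b,0\}=\max\{b,0\}=0$, which kills $C_3$ and forces $|\nabla u|\equiv 0$, so $u$ is constant and, by the equation, $u\equiv e^{-b/a}$. But $-b/a\ge 0 > -2$ gives $e^{-b/a} > e^{-2}$, contradicting our standing assumption $u\le e^{-2}$. Hence the assumption fails and the desired bound on $u$ follows.

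The only real obstacle I foresee is the \emph{interpretation} of the conclusion in case (i): the argument above naturally produces the supremum statement $\sup_M u > e^{-2}$, whereas the corollary is written as if it were a pointwise inequality. Upgrading to $u(x)\ge e^{-2}$ for every $x\in M$ would require an additional ingredient such as a Harnack-type inequality compatible with the Neumann condition and with the logarithmic nonlinearity. Assuming the supremum reading is what is intended, the proof is essentially a direct substitution into Theorem~\ref{main3}, exactly parallel to the Liouville-style application that appears already in the proof of Theorem~\ref{liouville}.
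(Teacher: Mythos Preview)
Your proof is correct and essentially identical to the paper's own argument: both treat $u$ as a stationary solution, apply Theorem~\ref{main3} with $K=H=0$, send $t\to\infty$, and in case~(i) argue by contradiction with $D=e^{-2}$ (the paper's version concludes $u=e^{-b/a}\ge 1>e^{-2}$, yours $u=e^{-b/a}>e^{-2}$, which is the same contradiction). Your observation that this line of reasoning literally yields only $\sup_M u>e^{-2}$ rather than the pointwise bound stated in~(i) is well taken and applies equally to the paper's proof, which glosses over the same point.
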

\begin{proof}The proof is followed by Theorem \ref{main3}. Indeed, since $u$ does not depend on $T$, we may let $T\to\infty$ in Theorem \ref{main3} to have following two cases.
\begin{enumerate}
\item If $a > 0\geq b$ and $u\leq e^{-2}$ then by Theorem \ref{main3}, we have that $|\nabla u|=0$. Consequently, $u=e^{-b/a}>1$. This is a contradiction since $u\leq e^{-2}$.
\item If $a<0, b\leq0$ and $ e^{-2} \leq u(x,t) \leq D$ then by Theorem \ref{main3}, we infer $|\nabla u|=0$, namely $u$ is constant. Hence, $u=e^{-b/a}$. 
\end{enumerate}
The proof is complete.
\end{proof}

\end{document}